\newtheorem{theorem}{Theorem}
\newtheorem{corollary}{Corollary}
\newtheorem{remark}{Remark}%
\newtheorem{definition}{Definition}
\newcommand{\e}{{\rm e}}
\newcommand{\deq}{\stackrel{\mbox{\scriptsize d}}{=}}
\newcommand{\indicator}{\ensuremath{\mathbbm{1}}}
\newcommand{\calC}{\mathcal{C}}
\newcommand{\calM}{\mathcal{M}}
\newcommand{\calX}{\mathcal{X}}
\newcommand{\dd}{\mathrm d}
\newcommand{\partder}[1]{\frac{\partial}{ \partial #1}}
\newcommand{\E}{\mathsf{E}}
\newcommand{\X}{\mathbb{X}}
\newcommand{\M}{\mathbb{M}}
\newcommand{\R}{\mathbb{R}}
\newcommand{\prob}{\mathsf{P}}
\providecommand{\keywords}[1]{
  \small 
  \textbf{\textsc{Keywords:}} #1
  \normalsize
}
\date{}
\title{On the Palm distribution of superposition of point processes}
\author[1]{Mario Beraha}
\author[2]{Federico Camerlenghi}
\affil[1]{\normalsize{Department of Mathematics, Politecnico di Milano, Italy}} 
\affil[2]{\normalsize{Department of Economics, Management and Statistics, University of Milano-Bicocca, Italy}}
\begin{document}

\maketitle

\begin{abstract}
Palm distributions are critical in the study of point processes. In the present paper we focus on a point process $\Phi$ defined as the superposition, i.e., sum, of two independent point processes, say $\Phi = \Phi_1 + \Phi_2$, and we characterize its Palm distribution. In particular, we show that the Palm distribution of $\Phi$ admits a simple mixture representation depending only on the Palm distribution of $\Phi_j$, as $j=1, 2$, and the associated moment measures. Extensions to the superposition of multiple point processes, and higher order Palm distributions, are treated analogously.
\end{abstract}

\keywords{Point processes; Palm distribution; reduced Palm kernel; superposition of point processes.}

\section{Introduction} \label{sec1}

Palm distributions are fundamental tools in the study of point processes, especially in the analysis of spatial patterns and stochastic modeling. These distributions describe the conditional behavior of a point process $\Phi$ given that one or more of its points, or \emph{atoms}, are located at specific positions. This concept is critical for both theoretical exploration and practical applications, such as spatial statistics and stochastic geometry \citep{MoWaBook03, coeurjolly2017tutorial, BaBlaKa}.

A common operation in the analysis of point processes is the \emph{superposition} of two independent processes $\Phi_1$ and $\Phi_2$, where the resulting process $\Phi = \Phi_1 + \Phi_2$ encompasses the atoms of both original processes. Superposition is a standard technique in point process theory and is widely used across various applied fields \citep{DaVeJo1, MoWaBook03}. Despite its importance, the behavior of Palm distributions under superposition has not been thoroughly explored, especially in terms of how the Palm distributions of the individual processes, i.e., $\Phi_1$ and $\Phi_2$, contribute to that of the final process process $\Phi$. The present paper aims to fill this gap by showing a simple and elegant result: the Palm distribution of the superposed process can be expressed as a mixture of the Palm distributions of the two independent underlying processes. This mixture representation involves the Palm distributions of $\Phi_1$ and $\Phi_2$, as well as their respective moment measures. Our findings extend naturally to the superposition of more than two point processes and to higher-order Palm distributions.\\

The paper is structured as follows. We introduce the background and notation in Section \ref{sec:notation} to state the main result of our paper concenring the superposition of two point processes (Section \ref{sec:main}). The proof of this theoretical finding is deferred to Section \ref{sec:proof}.

\section{Background and notation} \label{sec:notation}

Througout the paper,  we will adopt the notation of \cite{BaBlaKa}.
Let $(\X, d)$ be a complete and separable metric space and denote by $\calX = \mathcal B(\X)$ be the associated Borel $\sigma$-algebra. A counting measure $\mu$ is a locally-finite measure on $(\X, \calX)$ such that $\mu(B) \in \mathbb N$ for any relatively compact set $B \in \calX$.
Let $\M_\X$ be the set of counting measures on $(\X, \calX)$ and $\mathcal M_\X$ be the smallest $\sigma$-algebra which makes the mappings $\mu \mapsto \mu(B)$ measurable, for any $B \in \calX$. Finally, we denote by $(\Omega, \mathcal A, \prob)$ the underlying probability space.\\

First of all we remind the notion of a point process.
\begin{definition}   
    A point process $\Phi$ is a measurable map $\Phi: (\Omega,  \mathcal A) \rightarrow (\M_\X, \mathcal M_\X)$. Its probability distribution is given by $\prob_\Phi = \prob \circ \Phi^{-1}$.
\end{definition}
We remind that a  point process is said to be \emph{simple} if the following condition is satisfied:
\[
\prob(\forall x \in \X, \Phi(\{x\}) \leq 1) = 1. \]
It is also worth recalling that 
a point process $\Phi$ can be represented as a sum of Dirac delta masses $\Phi = \sum_{j \geq 1} \delta_{X_j}$, where the sequence $(X_{j})_{j \geq 1}$ is a \emph{measurable enumeration} of $\Phi$. In particular, we refer to   \cite[Chapter 9]{DaVeJo2} for a detailed discussion on enumerations.\\

We will now recall the definition of moment measures and factorial moment measures of a point process $\Phi$.
The (first) moment measure $M_{\Phi}$ of $\Phi$ is defined by $M_{\Phi}(B) = \E[\Phi(B)]$ for $B \in \calX$. 
Higher order extensions can be easily derived. To this end, given a point process $\Phi = \sum_{j \in \mathbb Z} \delta_{X_j}$, we denote its $k$-th power as 
\begin{equation}\label{eq:k-mom}
    \Phi^k = \sum_{(j_1, \ldots, j_k) \in \mathbb Z^k} \delta_{(X_{j_1}, \ldots, X_{j_k})}.
\end{equation}
The associated $k$-th order moment measure of $\Phi$, denoted as $M_{\Phi}^k$, is defined by
\[
    M_{\Phi^k}(B_1 \times \cdots \times B_k) = \E[\Phi^k(B_1 \times \cdots \times B_k)] = \E[\Phi(B_1) \cdots \Phi(B_k)], 
\]
for any $B_1, \ldots , B_k \in \calX$.
In addition, the $k$-th \emph{factorial} measure of $\Phi$ is defined similarly to \eqref{eq:k-mom}, but we sum only on $k$-tuples of pairwise distinct points. Formally, we define:
\[
    \Phi^{(k)} = \sum_{(j_1, \ldots, j_k) \in \mathbb Z^{(k)}} \delta_{(X_{j_1}, \ldots, X_{j_k})},
\]
where $\mathbb Z^{(k)} = \{(j_1, \ldots, j_k) \in \mathbb Z^k\colon j_\ell \neq j_m, \text{ for any } \ell \neq m\}$.
The $k$-th factorial moment measure of $\Phi$, denoted as $M_{\Phi^{(k)}}$, is defined by
\begin{equation*}
       M_{\Phi^{(k)}}(B_1 \times \cdots \times B_k) := \E[\Phi^{(k)}(B_1 \times \cdots \times B_k)] ,
\end{equation*}
for any $B_1, \ldots , B_k \in \calX$.\\

\emph{Palm distributions} are a basic tool used to describe the conditional law of $\Phi$ given the location of one of its atoms. See \cite{coeurjolly2017tutorial} for an historic account of contributions to Palm theory and for further intuition. Here, we follow the more abstract (but general) treatment of \cite{DaVeJo2} and \cite{BaBlaKa}.
Define the \emph{Campbell} measure of $\Phi$ by
\[
    \mathcal{C}_\Phi(B \times L) = \E[\Phi_B \indicator_L (\Phi)], \quad B \in \calX, L \in \calM_X .
\]
Observe that $M_{\Phi}(B) = \calC_{\Phi}(B \times \M_\X)$. Then by virtue of the measure disintegration theorem \citep[see, e.g.,][]{Kallenberg2021} we have the following definition.
\begin{definition}[Palm distribution]
    Consider the Campbell measure of $\Phi$. If $M_{\Phi}$ is $\sigma$-finite, then $\mathcal{C}_{\Phi}$ admits a disintegration
    \[
       \mathcal{C}_{\Phi}(B \times L) = \int_B \prob_\Phi^x(L) M_\Phi(\dd x).
    \]
    Then, $\prob_\Phi^x(\cdot)$ is called a Palm distribution of $\Phi$ at $x$ and $\{\prob_\Phi^x\}_{x \in \X}$ is called a family of Palm distribution of $\Phi$.
\end{definition}

Since for any $x \in \X$, $\prob_\Phi^x$ is a probability distribution over $(\M_\X, \calM_\X)$, it can be identified with the law of some point process, say $\Phi_x \sim \prob_\Phi^x$, called a \emph{Palm version} of $\Phi$ at $x$. 

\begin{remark}\label{rem:conditional_palm}
By Proposition 3.1.12 in \cite{BaBlaKa}, $\prob(\Phi_x(\{x\}) \geq 1) = 1$ for $M_\Phi$-almost all $x \in \X$. This justifies the interpretation of the Palm distribution at $x$ as the probability distribution of $\Phi$ conditionally to $\Phi$ having an atom at $x$.     
\end{remark}

\begin{remark}[Reduced Palm kernel]
    In light of Remark \ref{rem:conditional_palm}, the point process $\Phi^!_x := \Phi_x - \delta_x$ is well defined, and $\Phi^!_x$ is called the \emph{reduced Palm} version of $\Phi$ at $x$. 
\end{remark}

It is possible to extend the definition of Palm distribution to multiple \emph{conditioning} points $\bm x = (x_1, \ldots, x_k) \in \X^k$. The resulting Palm distribution is interpreted as the probability distribution conditionally to $\Phi$ having $k$ atoms at the locations $x_1, \ldots, x_k$.  Under some technical conditions on factorial moment measures of $\Phi$ (see Section 3.3.2 in \citep{BaBlaKa}), we have
\begin{equation}\label{eq:palm_algebra}
    \left(\Phi^!_{\bm x}\right)^!_{\bm y} \deq \Phi^!_{(\bm x, \bm y)}, \quad (\bm x, \bm y) \in \X^k \times \X^\ell.
\end{equation}

\section{Main result} \label{sec:main}

Here, we state the main theorem of the present paper, which  concerns the Palm distribution of the superposition of two point processes. 

\begin{theorem}[Main result]\label{teo:main}
    Let $\Phi_1$ and $\Phi_2$ be two independent point processes. Then, the  Palm kernel $(\Phi_1 + \Phi_2)_{x}$ can be expressed as a mixture, i.e.
    \[
        (\Phi_1 + \Phi_2)_{x} \deq \begin{cases}
            \Phi_{1 x} + \Phi_2 & \text{with probability } \frac{M_{\Phi_1}(\dd x)}{M_{\Phi_1}(\dd x) + M_{\Phi_2}(\dd x)} \\
            \Phi_{1} + \Phi_{2 x} & \text{with probability } \frac{M_{\Phi_2}(\dd x)}{M_{\Phi_1}(\dd x) + M_{\Phi_2}(\dd x)}
        \end{cases}
    \]
\end{theorem}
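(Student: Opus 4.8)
The plan is to identify the Palm distribution of $\Phi := \Phi_1 + \Phi_2$ directly through its Campbell measure, exploiting the uniqueness (up to $M_\Phi$-null sets) of the disintegration appearing in the definition of Palm distributions. First I would record the additivity of first moment measures, $M_\Phi = M_{\Phi_1} + M_{\Phi_2}$, which is immediate from linearity of expectation; this also shows that $M_\Phi$ is $\sigma$-finite whenever $M_{\Phi_1}$ and $M_{\Phi_2}$ are (a standing assumption needed for all three Palm kernels to exist). Writing $w_j := \dd M_{\Phi_j} / \dd M_\Phi$ for $j = 1, 2$, which satisfy $w_1 + w_2 = 1$ $M_\Phi$-almost everywhere, gives precise meaning to the mixing weights $M_{\Phi_j}(\dd x) / (M_{\Phi_1}(\dd x) + M_{\Phi_2}(\dd x))$ appearing in the statement.

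Next I would expand the Campbell measure of $\Phi$ using $\Phi(B) = \Phi_1(B) + \Phi_2(B)$, so that
\[
  \calC_\Phi(B \times L) = \E\!\left[\Phi_1(B)\, \indicator_L(\Phi_1 + \Phi_2)\right] + \E\!\left[\Phi_2(B)\, \indicator_L(\Phi_1 + \Phi_2)\right].
\]
For the first term, independence of $\Phi_1$ and $\Phi_2$ lets me condition on $\Phi_2$ and apply Fubini: for a fixed realization $\nu$ of $\Phi_2$ the set $L - \nu := \{\mu \in \M_\X \colon \mu + \nu \in L\}$ is measurable, because $(\mu, \nu) \mapsto \mu + \nu$ is measurable on $\M_\X \times \M_\X$, so the disintegration of $\calC_{\Phi_1}$ gives
\[
  \E\!\left[\Phi_1(B)\, \indicator_L(\Phi_1 + \nu)\right] = \int_B \prob_{\Phi_1}^x(L - \nu)\, M_{\Phi_1}(\dd x).
\]
Averaging over $\nu$ distributed as $\Phi_2$ and interchanging the order of integration identifies the first term with $\int_B \prob(\Phi_{1x} + \Phi_2 \in L)\, M_{\Phi_1}(\dd x)$, where $\Phi_{1x}$ is a Palm version of $\Phi_1$ at $x$ taken independent of $\Phi_2$; the key point is that, by independence, $\int_{\M_\X} \prob_{\Phi_1}^x(L - \nu)\, \prob_{\Phi_2}(\dd \nu) = \prob(\Phi_{1x} + \Phi_2 \in L)$. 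The second term is treated symmetrically, yielding $\int_B \prob(\Phi_1 + \Phi_{2x} \in L)\, M_{\Phi_2}(\dd x)$.

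Finally I would re-express both integrals against the common dominating measure $M_\Phi$ through the densities $w_1, w_2$, obtaining
\[
  \calC_\Phi(B \times L) = \int_B \Big[ w_1(x)\, \prob(\Phi_{1x} + \Phi_2 \in L) + w_2(x)\, \prob(\Phi_1 + \Phi_{2x} \in L) \Big]\, M_\Phi(\dd x),
\]
and then invoke the uniqueness of the disintegration to conclude that the bracketed mixture is a version of $\prob_\Phi^x(L)$ for $M_\Phi$-almost every $x$, which is exactly the claimed representation. The extension to $m > 2$ processes follows by the same computation applied to the $m$-fold sum (or by induction), and the higher-order statement follows by iterating this identity together with the reduction formula \eqref{eq:palm_algebra}.

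I expect the main technical obstacle to lie in the measurability and Fubini bookkeeping of the middle step: one must verify that $(x, \nu) \mapsto \prob_{\Phi_1}^x(L - \nu)$ is jointly measurable so that the interchange of integrals is legitimate, and that the resulting function of $x$ is genuinely a version of the Palm kernel of $\Phi$, so that no null-set pathology spoils the almost-everywhere identification. A minor additional point is the degenerate case in which $w_1(x)$ or $w_2(x)$ vanishes (or both $M_{\Phi_j}$ charge a neighbourhood of $x$ with vanishing density); this occurs only on an $M_\Phi$-null set and hence does not affect the conclusion.
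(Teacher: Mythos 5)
Your proposal is correct, but it follows a genuinely different route from the paper. You work directly at the level of the Campbell measure: you split $\calC_\Phi(B\times L)$ using $\Phi(B)=\Phi_1(B)+\Phi_2(B)$, condition on $\Phi_2$, apply the disintegration of $\calC_{\Phi_1}$ to the shifted event $L-\nu$, and then invoke uniqueness of the disintegration to read off $\prob_\Phi^x$. The paper instead uses the characterization of the Palm kernel through the Laplace functional (Proposition 3.2.1 of Baccelli--B\l{}aszczyszyn--Karray): it differentiates $L_\Phi(f+tg)$ at $t=0$, factorizes the resulting expectations by independence, and applies the Campbell--Little--Mecke formula to each factor, so the mixture appears after multiplying and dividing by $M_{\Phi_1}(\dd x)+M_{\Phi_2}(\dd x)$. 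The trade-off is as you anticipate: your argument is more elementary in that it avoids differentiation under the expectation and identifies $\prob_\Phi^x(L)$ for arbitrary events $L$ rather than through the Laplace transform, but it puts the burden on the joint measurability of $(x,\nu)\mapsto\prob_{\Phi_1}^x(L-\nu)$ and the Fubini interchange; this burden largely disappears if you phrase the middle step via the functional form of the Campbell--Little--Mecke formula, i.e.\ apply $\E[\Phi_1(B)h(\Phi_1)]=\int_B\E[h(\Phi_{1x})]\,M_{\Phi_1}(\dd x)$ with $h(\mu)=\prob(\mu+\Phi_2\in L)$, whose measurability in $\mu$ follows from Fubini on the product space and whose measurability in $x$ is part of the kernel property of $\{\prob_{\Phi_1}^x\}_x$. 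The paper's route, by contrast, gets the independence factorization for free at the level of Laplace functionals at the cost of relying on the (nontrivial) fact that the identity for $\partial_t L_\Phi(f+tg)|_{t=0}$ uniquely characterizes the Palm kernel. Both arguments extend to $k$ processes and, via the Palm algebra \eqref{eq:palm_algebra}, to multiple conditioning points exactly as you indicate.
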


\begin{remark}[Reduced Palm kernel of superposition]
Exploiting the relation $\Phi_{j x} \deq \Phi_{j x}^! + \delta_x$, for $M_{\Phi_j}$-almost all $x$, $j=1, 2$, we obtain an equivalent statement of \Cref{teo:main} in terms of the reduced Palm kernel, namely:
    \[
        (\Phi_1 + \Phi_2)^!_{x} \deq \begin{cases}
            \Phi^!_{1 x} + \Phi_2 & \text{with probability } \frac{M_{\Phi_1}(\dd x)}{M_{\Phi_1}(\dd x) + M_{\Phi_2}(\dd x)} \\
            \Phi_{1} + \Phi^!_{2 x} & \text{with probability } \frac{M_{\Phi_2}(\dd x)}{M_{\Phi_1}(\dd x) + M_{\Phi_2}(\dd x)}
        \end{cases}
    \]
\end{remark}

Theorem \ref{teo:main} can be extended in several directions. In particular, we can describe the Palm distribution of the superposition of more than two point processes, as clarified in the following corollary.
\begin{corollary}
     Let $\Phi_1, \ldots, \Phi_k$ be independent point processes, then
    \[
        \left(\sum_{j=1}^k \Phi_j\right)_x \deq \Phi_{j x} + \sum_{\ell \neq j} \Phi_\ell \quad \text{with probability proportional to } M_{\Phi_j}(\dd x).
    \]
    Moreover, the same results hold true for the corresponding reduced Palm kernels.
\end{corollary}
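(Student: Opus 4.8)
The plan is to work directly with the Campbell measure of $\Phi := \Phi_1 + \Phi_2$, exploiting the additivity of $\mu \mapsto \mu(B)$ together with the independence of $\Phi_1$ and $\Phi_2$, and then to read off the Palm kernel from uniqueness of the disintegration. Throughout I assume, as is needed for the Palm distributions in the statement to be defined, that $M_{\Phi_1}$ and $M_{\Phi_2}$, hence $M_{\Phi} = M_{\Phi_1} + M_{\Phi_2}$, are $\sigma$-finite. The first step is to upgrade the disintegration in the definition of the Palm distribution to its functional (Campbell--Mecke) form: by a routine monotone class / monotone convergence argument, starting from $f = \indicator_{B \times L}$, one obtains for $j = 1,2$ and every measurable $f \colon \X \times \M_\X \to [0,\infty]$
\[
\E\!\left[\int_\X f(x, \Phi_j)\, \Phi_j(\dd x)\right] = \int_\X \E\!\left[f(x, \Phi_{jx})\right] M_{\Phi_j}(\dd x), \qquad \Phi_{jx} \sim \prob_{\Phi_j}^x .
\]

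Next I would fix $B \in \calX$ and $L \in \calM_\X$ and expand, by additivity, $\calC_{\Phi}(B \times L) = \E[\Phi_1(B)\, \indicator_L(\Phi_1 + \Phi_2)] + \E[\Phi_2(B)\, \indicator_L(\Phi_1 + \Phi_2)]$. The two terms are symmetric, so I treat the first. Conditioning on $\Phi_2 = \varphi_2$ and using independence, the inner $\Phi_1$-expectation is $\E[\Phi_1(B)\, \indicator_L(\Phi_1 + \varphi_2)]$, to which the functional identity applies with $f(x, \mu) = \indicator_B(x)\, \indicator_L(\mu + \varphi_2)$, giving $\int_B \prob(\Phi_{1x} + \varphi_2 \in L)\, M_{\Phi_1}(\dd x)$; integrating over $\varphi_2 \sim \prob_{\Phi_2}$ and swapping the (nonnegative) integrals, which is legitimate by $\sigma$-finiteness of $M_{\Phi_1}$, yields $\E[\Phi_1(B)\, \indicator_L(\Phi_1 + \Phi_2)] = \int_B \prob(\Phi_{1x} + \Phi_2 \in L)\, M_{\Phi_1}(\dd x)$, where now $\Phi_{1x} \sim \prob_{\Phi_1}^x$ is taken \emph{independent} of $\Phi_2 \sim \prob_{\Phi_2}$. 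Treating the second term in the same way gives
\[
\calC_{\Phi}(B \times L) = \int_B \prob(\Phi_{1x} + \Phi_2 \in L)\, M_{\Phi_1}(\dd x) + \int_B \prob(\Phi_1 + \Phi_{2x} \in L)\, M_{\Phi_2}(\dd x) .
\]

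To conclude, I would identify the Palm kernel. Since $M_{\Phi_1}, M_{\Phi_2} \ll M_{\Phi}$, write $w_j(x) = \frac{M_{\Phi_j}(\dd x)}{M_{\Phi_1}(\dd x) + M_{\Phi_2}(\dd x)}$ for the Radon--Nikodym derivative of $M_{\Phi_j}$ with respect to $M_{\Phi}$, defined $M_{\Phi}$-a.e., with $w_1 + w_2 = 1$ a.e. Rewriting the previous display as a single integral against $M_{\Phi}$ and comparing with the defining disintegration $\calC_{\Phi}(B \times L) = \int_B \prob_{\Phi}^x(L)\, M_{\Phi}(\dd x)$, the $\sigma$-finiteness of $M_{\Phi}$ yields, for each fixed $L$,
\[
\prob_{\Phi}^x(L) = w_1(x)\, \prob(\Phi_{1x} + \Phi_2 \in L) + w_2(x)\, \prob(\Phi_1 + \Phi_{2x} \in L) \qquad \text{for } M_{\Phi}\text{-a.e. } x .
\]
Since $(\M_\X, \calM_\X)$ is countably generated, there is a countable $\pi$-system generating $\calM_\X$; applying the last identity along this $\pi$-system and discarding the countable union of $M_\Phi$-null sets shows that, for $M_{\Phi}$-a.e. $x$, the probability measure $\prob_{\Phi}^x$ coincides with the mixture $w_1(x)\, \Law(\Phi_{1x} + \Phi_2) + w_2(x)\, \Law(\Phi_1 + \Phi_{2x})$, which is exactly the asserted representation of $(\Phi_1 + \Phi_2)_x$. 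The reduced-Palm form of the statement then follows from $\Phi_{jx} \deq \Phi^!_{jx} + \delta_x$ for $M_{\Phi_j}$-a.e. $x$, and the corollary for $k$ independent processes follows either by the same computation with the sum split into $k$ terms or by induction on $k$.

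The step I expect to be the main obstacle is the interchange in the second paragraph: namely, the claim that after disintegrating the Campbell measure of $\Phi_1$ \emph{inside} the expectation over $\Phi_2$ one may treat $\Phi_{1x}$ and $\Phi_2$ as independent, i.e.\ that the conditional disintegration glues back to the product law on $\M_\X \times \M_\X$. Making this precise amounts to a Fubini--Tonelli argument on $(\M_\X \times \M_\X,\, \calM_\X \otimes \calM_\X,\, \prob_{\Phi_1} \otimes \prob_{\Phi_2})$ applied to the nonnegative kernel $(x, \mu_1, \mu_2) \mapsto \indicator_B(x)\, \indicator_L(\mu_1 + \mu_2)$, which requires checking joint measurability of the intermediate maps (in particular that $x \mapsto \prob_{\Phi_1}^x$ is a kernel), using $\sigma$-finiteness to justify the swaps, and invoking uniqueness of the disintegration; the final passage from ``the identity holds for each $L$ outside an $L$-dependent null set'' to ``the two measures agree for a.e.\ $x$'' is precisely where the countable $\pi$-system is needed.
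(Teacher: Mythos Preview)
Your argument is correct. The paper gives no separate proof for this corollary; it is stated as an immediate consequence of Theorem~\ref{teo:main} (the case $k=2$), and your closing remark that the general $k$ follows by induction or by splitting the sum into $k$ terms is exactly how the paper intends it to be read.

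Where your proposal genuinely differs is in the underlying argument for $k=2$. The paper's proof of Theorem~\ref{teo:main} works through the Laplace functional characterization $\partder{t} L_\Phi(f+tg)|_{t=0} = -\int g(x)\, L_{\Phi_x}(f)\, M_\Phi(\dd x)$: it differentiates $L_{\Phi_1+\Phi_2}(f+tg)$, factors the resulting expectation by independence, applies Campbell--Little--Mecke to each piece, and then reads off the Palm kernel from uniqueness of this characterization. You instead disintegrate the Campbell measure $\calC_\Phi$ directly, which is more elementary in that it avoids differentiation under the expectation and the Laplace characterization, but in exchange you must carry out the Fubini/measurability checks you flag and the final $\pi$-system argument to upgrade the ``for each $L$, for $M_\Phi$-a.e.\ $x$'' identity to an a.e.\ equality of measures. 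The paper's route is a bit shorter because the Laplace-functional characterization absorbs that uniqueness step; yours is closer to first principles and makes the role of independence and of the Radon--Nikodym weights $w_j$ completely explicit.
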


The case of multiple conditioning points can be formally addressed by means of \eqref{eq:palm_algebra}. In particular we can state the following.
\begin{corollary}
Let $\Phi_1$ and $\Phi_2$ be two independent point processes.
    For any $(x, y) \in \X^2$, we have
    \[
    (\Phi_1 + \Phi_2)^!_{(x, y)} \deq \begin{cases}
        \Phi^!_{1(x, y)} + \Phi_2 &\text{with probability } \propto M_{\Phi_1^{(2)}}(\dd x \, \dd y)\\
        \Phi^!_{1x} + \Phi^!_{2y} &\text{with probability } \propto M_{\Phi_{1x}}(\dd x)  M_{\Phi_{2y}}(\dd y)\\
        \Phi^!_{1y} + \Phi^!_{2x} &\text{with probability } \propto  M_{\Phi_{1x}}(\dd y)  M_{\Phi_{2y}}(\dd x)\\
        \Phi_1+ \Phi^!_{2(x, y)} &\text{with probability } \propto M_{\Phi_2^{(2)}}(\dd x \, \dd y) \\
    \end{cases}
    \]
\end{corollary}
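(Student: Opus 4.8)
The plan is to derive the two-point statement from a double application of Theorem~\ref{teo:main}, using the iteration identity \eqref{eq:palm_algebra} to reduce $(\Phi_1+\Phi_2)^!_{(x,y)}$ to $\big((\Phi_1+\Phi_2)^!_x\big)^!_y$; the one delicate point is tracking how mixture weights transform under the Palm operation. Write $\Phi=\Phi_1+\Phi_2$. By the Remark following Theorem~\ref{teo:main}, $\Phi^!_x$ has the law of the mixture of $\Phi^!_{1x}+\Phi_2$ with probability $M_{\Phi_1}(\dd x)/\big(M_{\Phi_1}(\dd x)+M_{\Phi_2}(\dd x)\big)$ and of $\Phi_1+\Phi^!_{2x}$ with the complementary probability; since only distributions matter, the two summands in each case may be taken independent, as the law of $\Phi^!_{jx}$ depends only on $\prob_{\Phi_j}$. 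Hence each summand is itself a superposition of two independent point processes, so Theorem~\ref{teo:main} applies to it at $y$, and \eqref{eq:palm_algebra} for $\Phi$ identifies $(\Phi^!_x)^!_y$ with $\Phi^!_{(x,y)}$.

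First I would isolate an elementary ``Palm of a mixture'' fact: if a point process $Z$ has law $\alpha\,\prob_{Z_A}+(1-\alpha)\,\prob_{Z_B}$, then, because the Campbell measure is linear in the law and $M_Z=\alpha M_{Z_A}+(1-\alpha)M_{Z_B}$, the disintegration defining the reduced Palm kernel shows that the reduced Palm kernel of $Z$ at $y$ is the mixture of those of $Z_A$ and $Z_B$ at $y$, with weights re-scaled to $\alpha\,M_{Z_A}(\dd y)/M_Z(\dd y)$ and $(1-\alpha)\,M_{Z_B}(\dd y)/M_Z(\dd y)$ (read as Radon--Nikodym densities). Applying this to $\Phi^!_x$, then Theorem~\ref{teo:main} to each of the superpositions $\Phi^!_{1x}+\Phi_2$ and $\Phi_1+\Phi^!_{2x}$ at $y$, then the mixture fact again, and finally \eqref{eq:palm_algebra} for $\Phi_1$ and for $\Phi_2$ to rewrite $(\Phi^!_{jx})^!_y\deq\Phi^!_{j(x,y)}$, yields a four-component mixture supported exactly on $\Phi^!_{1(x,y)}+\Phi_2$, $\Phi^!_{1x}+\Phi^!_{2y}$, $\Phi^!_{1y}+\Phi^!_{2x}$, $\Phi_1+\Phi^!_{2(x,y)}$.

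It remains to compute the weights. With $m_j=M_{\Phi_j}(\dd x)$, $n_j=M_{\Phi_j}(\dd y)$, $p_j=M_{\Phi^!_{jx}}(\dd y)$, the two re-scaled steps multiply out and the first-step denominator $m_1+m_2$ cancels against the moment-measure factors of the second step, leaving the four weights $m_1 p_1$, $m_1 n_2$, $m_2 n_1$, $m_2 p_2$ normalized by their common sum $V$. The identity $M_{\Psi^{(2)}}(\dd x\,\dd y)=M_\Psi(\dd x)\,M_{\Psi^!_x}(\dd y)$ — obtained from the defining disintegration of the Campbell measure of $\Psi$ with integrand $(x,\mu)\mapsto(\mu-\delta_x)(\dd y)$ — turns $m_1 p_1$ and $m_2 p_2$ into $M_{\Phi_1^{(2)}}(\dd x\,\dd y)$ and $M_{\Phi_2^{(2)}}(\dd x\,\dd y)$, whereas $m_1 n_2$, $m_2 n_1$ are $M_{\Phi_1}(\dd x)M_{\Phi_2}(\dd y)$ and $M_{\Phi_2}(\dd x)M_{\Phi_1}(\dd y)$; their sum $V$ equals $M_{(\Phi_1+\Phi_2)^{(2)}}(\dd x\,\dd y)$, either by the same identity applied to $\Psi=\Phi_1+\Phi_2$ or by splitting the factorial power of a superposition over ordered pairs of enumeration indices from the two processes and taking expectations. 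This is precisely the ``$\propto$'' normalization claimed.

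The main obstacle is the weight bookkeeping under iteration: the naive recipe — keep the first-step weights and multiply by each component's second-step Palm weight — is \emph{wrong}, and one must route through the mixture fact so that conditioning on an atom at $y$ correctly re-tilts the first-step mixture toward the component more likely to place a point near $y$; only then do the intermediate normalizers telescope to the single factor $M_{(\Phi_1+\Phi_2)^{(2)}}(\dd x\,\dd y)$. A secondary point is reconciling the ``almost everywhere'' qualifiers attached to \eqref{eq:palm_algebra}, to Theorem~\ref{teo:main}, and to the various disintegrations, all of which hold jointly $M_{(\Phi_1+\Phi_2)^{(2)}}$-a.e.; alternatively, one can avoid the nesting entirely and verify the four-term mixture directly against the second-order factorial Campbell measure of $\Phi_1+\Phi_2$, at the cost of a longer but routine computation.
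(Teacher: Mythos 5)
Your proof is correct and follows the same skeleton as the paper's: reduce $(\Phi_1+\Phi_2)^!_{(x,y)}$ to $((\Phi_1+\Phi_2)^!_x)^!_y$ via \eqref{eq:palm_algebra}, apply Theorem~\ref{teo:main} twice, and convert the weights using $M_{\Phi_j^{(2)}}(\dd x\,\dd y)=M_{\Phi_j}(\dd x)\,M_{\Phi^!_{jx}}(\dd y)$. The genuine difference is your ``Palm of a mixture'' lemma, and it is not cosmetic: the paper computes the weights by a bare chain rule, multiplying the first-step weight $M_{\Phi_1}(\dd x)/(M_{\Phi_1}(\dd x)+M_{\Phi_2}(\dd x))$ by the second-step weight $M_{\Phi^!_{1x}}(\dd y)/(M_{\Phi^!_{1x}}(\dd y)+M_{\Phi_2}(\dd y))$, and then asserts that the four resulting denominators coincide. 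They do not in general: cases 1--2 carry the factor $M_{\Phi^!_{1x}}(\dd y)+M_{\Phi_2}(\dd y)$ while cases 3--4 carry $M_{\Phi_1}(\dd y)+M_{\Phi^!_{2x}}(\dd y)$, and these agree only when $M_{\Phi^!_{1x}}(\dd y)-M_{\Phi_1}(\dd y)=M_{\Phi^!_{2x}}(\dd y)-M_{\Phi_2}(\dd y)$ (true for Poisson processes by Slivnyak's theorem, but false, e.g., for a mixed Poisson $\Phi_1$ superposed with a Poisson $\Phi_2$). Your re-tilting step --- weighting the two branches of the first-stage mixture by $\alpha M_{Z_A}(\dd y)/M_Z(\dd y)$ and $(1-\alpha)M_{Z_B}(\dd y)/M_Z(\dd y)$ before applying Theorem~\ref{teo:main} inside each branch --- is exactly what makes the intermediate normalizers telescope to the single common factor $M_{(\Phi_1+\Phi_2)^{(2)}}(\dd x\,\dd y)$, which is the sum of the four numerators; so you recover the corollary as stated and in fact repair the weight computation in the paper's own argument. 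The only points to make fully explicit in a complete write-up are (i) that each branch of the first-stage mixture is an \emph{independent} superposition (which follows from the factorized Laplace functional in the proof of Theorem~\ref{teo:main}), so that the theorem may legitimately be applied again at $y$, and (ii) the $M_{(\Phi_1+\Phi_2)^{(2)}}$-almost-everywhere qualifiers, which you already flag.
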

\begin{proof}
We use the Palm algebra \eqref{eq:palm_algebra} to show that 
\[(\Phi_1 + \Phi_2)^!_{(x, y)}=((\Phi_1 + \Phi_2)^!_x )_y^{!}, \]
and then we apply Theorem \ref{teo:main} twice. As a consequence we obtain the four cases in the statement of the theorem. 
    It remains to assign the probabilities to each of the four cases.
    Consider for instance the first one, by the chain rule its probability equals
    \[
        \frac{M_{\Phi_1}(\dd x)}{M_{\Phi_1}(\dd x) + M_{\Phi_2}(\dd x)} \times \frac{M_{\Phi^!_{1x}}(\dd y)}{M_{\Phi^!_{1x}}(\dd y) + M_{\Phi_2}(\dd y)}
    \]
    which is equal to the one reported in the satament since $M_{\Phi_1^{(2)}}(\dd x \, \dd y) = M_{\Phi^!_{1x}}(\dd y) M_{\Phi_1}(\dd x)$ by Proposition 3.3.9 in \cite{BaBlaKa}. Further, note that by using the same relation, it can be seen that all the denominators of the four different cases are equal.
\end{proof}

\section{Proof of Theorem \ref{teo:main}} \label{sec:proof}

    Let $\Phi = \Phi_1 + \Phi_2$. By Proposition 3.2.1 in \cite{BaBlaKa}, the Palm kernel of a point process $\Phi$ is uniquely characterized by the following relation
    \[
        \partder{t} L_{\Phi}(f + t g)|_{t = 0} = - \int_{\X} g(x) L_{\Phi_x}(f) M_{\Phi}(\dd x)
    \]
    for all measurable $f, g: \X \rightarrow \R_+$ such that $g$ is $M_{\Phi}$-integrable. Here $L_{\Phi}(f)$ denotes the Laplace functional of $\Phi$ evaluated at $f$, i.e., $L_{\Phi}(f) = \E\left[e^{- \int_{\X} f(x) \Phi(\dd x)}\right]$.
    For ease of notation, throughout the proof we write $\Phi(f) := \int_{\X} f(x) \Phi(\dd x)$.

    By Lebesgues' theorem, we have
    \begin{align*}
        \partder{t} L_{\Phi}&(f + t g)|_{t = 0} = \partder{t} \E\left[\e^{-\int_{\X} (f(y) + tg(y)) (\Phi_1 + \Phi_2)(\dd y)} \right]|_{t=0} \\
        &= - \E\left[ \int_{\X} g(x) \e^{-\int_{\X} f(y) (\Phi_1 + \Phi_2)(\dd y)} \Phi_1(\dd x) \right] \\
        & \qquad \qquad - \E\left[ \int_{\X} g(x) \e^{-\int_{\X} f(y) (\Phi_1 + \Phi_2)(\dd y)} \Phi_2(\dd x) \right] \\
        & = -\E\left[\Phi_1(g) \e^{-\Phi_1(f)}\right]\E[\e^{-\Phi_2(f)}] -\E\left[\Phi_2(g) \e^{-\Phi_2(f)}\right]\E[\e^{-\Phi_1(f)}]
    \end{align*}
    where the last equality follows from the independence of $\Phi_1$ and $\Phi_2$.
    Then, applying the Campbell-Little-Mecke formula, we obtain
    \begin{multline*}
        \partder{t} L_{\Phi}(f + t g)|_{t = 0} = \\
        - \left[\int_{\X} g(x) L_{\Phi_{1x}} M_{\Phi_1}(\dd x)\right] L_{\Phi_2}(f) - \left[\int_{\X} g(x) L_{\Phi_{2x}} M_{\Phi_2}(\dd x)\right] L_{\Phi_1}(f)
    \end{multline*}
    and the thesis now follows by multiplying and dividing by $M_{\Phi_1}(\dd x ) + M_{\Phi_2}(\dd x)$.

\bibliography{references}

\begin{thebibliography}{6}
\providecommand{\natexlab}[1]{#1}
\providecommand{\url}[1]{\texttt{#1}}
\expandafter\ifx\csname urlstyle\endcsname\relax
  \providecommand{\doi}[1]{doi: #1}\else
  \providecommand{\doi}{doi: \begingroup \urlstyle{rm}\Url}\fi

\bibitem[Baccelli et~al.(2020)Baccelli, B{\l}aszczyszyn, and Karray]{BaBlaKa}
F.~Baccelli, B.~B{\l}aszczyszyn, and M.~Karray.
\newblock Random measures, point processes, and stochastic geometry.
\newblock \emph{HAL preprint available at https://hal.inria.fr/hal-02460214/}, 2020.

\bibitem[Coeurjolly et~al.(2017)Coeurjolly, M{\o}ller, and Waagepetersen]{coeurjolly2017tutorial}
J.-F. Coeurjolly, J.~M{\o}ller, and R.~Waagepetersen.
\newblock A tutorial on palm distributions for spatial point processes.
\newblock \emph{International Statistical Review}, 85\penalty0 (3):\penalty0 404--420, 2017.

\bibitem[Daley and Vere-Jones(2003)]{DaVeJo1}
D.~J. Daley and D.~Vere-Jones.
\newblock \emph{An introduction to the theory of point processes. {V}ol. {I}}.
\newblock Probability and its Applications (New York). Springer-Verlag, New York, second edition, 2003.
\newblock Elementary theory and methods.

\bibitem[Daley and Vere-Jones(2008)]{DaVeJo2}
D.~J. Daley and D.~Vere-Jones.
\newblock \emph{An introduction to the theory of point processes. {V}ol. {II}}.
\newblock Probability and its Applications (New York). Springer, New York, second edition, 2008.
\newblock ISBN 978-0-387-21337-8.
\newblock General theory and structure.

\bibitem[Kallenberg(2021)]{Kallenberg2021}
O.~Kallenberg.
\newblock \emph{Foundations of modern probability}, volume~99 of \emph{Probability Theory and Stochastic Modelling}.
\newblock Springer, Cham, 2021.
\newblock Third edition.

\bibitem[M{\o}ller and Waagepetersen(2003)]{MoWaBook03}
J.~M{\o}ller and R.~P. Waagepetersen.
\newblock \emph{Statistical inference and simulation for spatial point processes}.
\newblock CRC Press, 2003.

\end{thebibliography}

\end{document}